\newtheorem{theorem}{Theorem}[section]
\newtheorem{lemma}[theorem]{Lemma}
\theoremstyle{definition}
\newtheorem{definition}[theorem]{Definition}
\newtheorem{cor}[theorem]{Corollary}
\theoremstyle{remark}
\newtheorem{proposition}[theorem]{Proposition}
\newtheorem{remark}[theorem]{Remark}
\newtheorem{problem}[theorem]{\bf Problem}
\numberwithin{equation}{section}
\begin{document}

\title[Nonexistence for complete K\"ahler Einstein  metrics]{Nonexistence for complete K\"ahler Einstein  metrics on some noncompact manifolds}

\author{Peng Gao}
\address{Department of mathematics, Harvard University, Cambridge, MA 02138, USA}
\email{penggao@math.harvard.edu}
\author{Shing-Tung Yau}
\address{Department of mathematics, Harvard University, Cambridge, MA 02138, USA}
\email{yau@math.harvard.edu}
\author{Wubin Zhou}
\address{Shanghai Center for Mathematical Sciences,
Shanghai  200433, China}
\email{wubin\_zhou@fudan.edu.cn}


\date{March 22, 2016}

\keywords{K\"ahler-Einstein metrics, exceptional divisor, open manifolds}

\begin{abstract}
Let $M$ be a compact K\"ahler manifold and $N$ be a subvariety with codimension greater than or equal to 2. We show that there are no complete K\"ahler--Einstein metrics on $M-N$.  
As an application, let $E$ be an exceptional divisor of $M$.  Then  $M-E$ cannot admit any complete K\"ahler--Einstein metric if blow-down of $E$ is a complex variety with only canonical or terminal singularities. A similar result is shown for pairs. 
\end{abstract}

\maketitle

\section{Introduction and Main Theorem}
A basic question in K\"ahler geometry is how to find on each K\"ahler manifold a canonical metric such as  K\"ahler--Einstein  metric,  constant scalar curvature K\"ahler  metric,  or even  extremal metric.  When the K\"ahler manifold is compact with negative or zero first Chern class, the question has been solved by the senior author's celebrated work on Calabi's conjecture \cite{Yau3}. Whereas the first Chern class is positive,  there is an obstruction called Futaki invariant for the existence of K\"ahler--Einstein metric.

 For the noncompact case,  The majority of work focuses on open K\"aher manifolds or quasi-projective manifolds.  A complex manifold $M$ is open (resp. quasi-projective) if there is a compact K\"ahler (resp. projective) manifold $\bar M$ with an effective divisor $D$ such that $M$ is biholomorphic to $\bar M-D$.  
 The second author raised the question concerning existence of complete K\"ahler-Einstein metrics on quasi-projective varieties in \cite{Yau4}, where the results of \cite{CY} were announced for the case with constant negative scalar curvature. Also, the second author announced in \cite{Yau4} the existence of complete Ricci-flat K\"ahler metrics on the complement of an anticanonical divisor, using methods following his earlier work in \cite{Yau3,Yau}.
 
 Cheng and Yau \cite{CY} constructed complete negative K\"ahler-Einstein metrics on  $\bar M-D$ when $D$ is a normal crossing divisor and $K_M+D$ positive.
  In \cite{TY1,TY2} the second author showed that there exists a complete Ricci flat K\"ahler metric on quasi-projective  $M$, if $D$ is a neat and almost ample smooth divisor on $M$; or $\bar M$ is a compact K\"ahler orbifold and $D$ is a neat, almost ample and admissible divisor on $\bar M$. This follows the analysis of \cite{Yau3}. In \cite{TY3} he also proved on open manifold $M$ there are complete K\"ahler-Einstein metrics with negative scalar curvature if the adjoint canonical bundle $K_M+D$ is ample. This too follows the analysis of \cite{Yau3}.

Several years ago, the second author \cite{FYZ2} proposed the following questions
\begin{problem}	Let $M$ be a compact K\"ahler manifold and $N$ be a subvariety with codimension bigger than or equal to 2, how to find a complete canonical metric on the noncompact K\"ahler manifold $M-N$?
\end{problem}

In the Ricci-flat case, this was answered in \cite{Yau4} based on a theorem proved in \cite{Yau} for volume of complete noncompact Riemannian manifolds.
In order to handle this question, the authors (in \cite{FYZ2,FYZ1}) introduced the concept of complete metrics with  Poincar\'e-Mok-Yau (PMY) asymptotic property, and  constructed many constant scalar curvature K\"ahler metrics with  PMY asymptotic property on  some special types of noncompact K\"ahler manifolds. Since  these PMY type metrics are not K\"ahler--Einstein,  naturally one can ask  the following question
\begin{problem}\label{p2}
Can $M-N$  be endowed with complete K\"ahler--Einstein  metrics?
\end{problem}

In fact, little is known about the obstruction for the existence of complete K\"ahler--Einstein on noncompact K\"ahler manifolds.  The unique outstanding result is due to Mok and Yau's main theorem in \cite{MY} which states that a bounded domain $\Omega$ admits a complete K\"ahler--Einstein  if and only if $\Omega$ is a domian of holomorphy. If $N$ is a higher codimenion subvariety of a bounded domain $\Omega$, then $\Omega-N$ is not a domain of holomorphy. This implies on $\Omega-N$ there are no complete K\"ahler metrics with nonpositve Ricci curvature.   In our discussions on Problem \ref{p2}, the senior author proposed that the answer should be negative and the original work in \cite{MY} will give hints. Here we follow this idea and it turns out that we can obtian the following theorem
\begin{theorem}\label{thm1}
There are no complete K\"ahler metrics $\omega$ on $M-N$ satisfying $-\lambda\leq Ric_\omega 
\leq 0$ with nonnegative constant $\lambda$. 
\end{theorem}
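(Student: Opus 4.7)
The strategy is to follow the philosophy of Mok-Yau \cite{MY}: compare the hypothetical metric $\omega$ with an auxiliary smooth Kähler metric $\omega_M$ on the compactification $M$ via a Schwarz-type lemma, and then exploit the codimension-$\geq 2$ hypothesis on $N$ to extend a plurisubharmonic object across $N$, deriving a contradiction with completeness.

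Fix a smooth Kähler form $\omega_M$ on $M$. Applying Yau's Schwarz lemma to the identity map $(M-N,\omega)\to(M,\omega_M)$ --- using completeness of $\omega$ and the lower Ricci bound $Ric_\omega\geq-\lambda\omega$ on the source, together with the bounded geometry of $\omega_M$ on the compact target --- should produce an estimate
\[
\omega_M \leq C\,\omega \quad \text{on } M-N
\]
for some constant $C=C(\lambda,\omega_M)>0$. Consequently, $u:=\log(\omega^n/\omega_M^n)\geq -n\log C$ is bounded from below.

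Next, the upper Ricci bound $Ric_\omega\leq 0$ yields $i\partial\bar\partial u = Ric_{\omega_M}-Ric_\omega\geq -C_1\omega_M$, so $u$ is $\omega_M$-quasi-plurisubharmonic on $M-N$. Since $N$ has complex codimension at least two and $u$ is bounded below, I expect to extend $u$ to a quasi-PSH function on the compact manifold $M$ --- for example by applying the Skoda--El Mir trivial extension to the closed positive $(1,1)$-current $-Ric_\omega+C_1\omega_M$, after verifying local finiteness of its mass near $N$, and then recovering a globally defined quasi-PSH potential. Being upper semi-continuous on compact $M$, the extension is bounded above, giving $u\leq K$ and hence $\omega^n\leq e^K\omega_M^n$ on $M-N$. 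Combined with the previous step, the eigenvalues of $\omega$ relative to $\omega_M$ are then pinched between two positive constants, so $\omega$ is uniformly equivalent to $\omega_M$ on $M-N$. But then any sequence $p_k\in M-N$ converging in $M$ to $p\in N$ is $\omega_M$-Cauchy, hence $\omega$-Cauchy, yet has no limit in $M-N$, contradicting completeness of $(M-N,\omega)$.

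The main obstacle is the extension step: standard extension of quasi-PSH functions across codim-$\geq 2$ analytic sets normally requires local boundedness from \emph{above}, which is precisely what one is trying to establish. Routing the extension through the positive current $-Ric_\omega+C_1\omega_M$ bypasses this via Skoda--El Mir, but still requires a delicate local-mass estimate near $N$ that combines the Ricci bounds with the Schwarz comparison from the first step.
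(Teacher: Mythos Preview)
Your overall architecture is the same as the paper's---a Schwarz-type lower bound on the volume form together with a plurisubharmonicity/codimension-$2$ upper bound, following Mok--Yau---but both of your main steps, as written, have gaps that the paper's \emph{local} implementation avoids.

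\textbf{Step 1 (Schwarz lemma).} Yau's Schwarz lemma for the identity $(M-N,\omega)\to(M,\omega_M)$ requires the target to have holomorphic bisectional curvature bounded above by a \emph{negative} constant; ``bounded geometry'' of the compact target is not enough. (For a model counterexample, take $\mathbb{C}\to\mathbb{C}/\Lambda$, $z\mapsto z^2$: source complete and Ricci-flat, target compact and flat, yet $f^*\omega$ is unbounded.) A generic compact K\"ahler manifold carries no such negatively curved metric, so your global application does not go through. The paper instead works on a coordinate ball $B_\xi$ around a point of $N$, equips it with the Poincar\'e metric $\omega_P$ (which \emph{does} have bisectional curvature $\le -c<0$), and runs the Chern--Lu inequality for $u=\mathrm{tr}_\omega\omega_P$. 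Since $\omega$ is not complete near $\partial B_\xi$, Yau's maximum principle cannot be invoked directly; the paper handles this by a dichotomy on where a maximizing sequence for $u$ accumulates (interior/boundary versus $N$), using completeness of $\omega$ only near $N$. This yields $\omega\ge C\,\omega_{\mathrm{eucl}}$ locally, hence the lower bound on $\det g$.

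\textbf{Step 2 (extension).} Your worry here is overstated, and Skoda--El~Mir with a ``delicate local-mass estimate'' is unnecessary. A function plurisubharmonic on a polydisc minus a codimension-$\ge 2$ analytic set is \emph{automatically} locally bounded above: slice by discs in the $z_2$-direction (which miss the singular point) and apply the one-variable maximum principle, obtaining a bound by the supremum on the compact distinguished boundary; then repeat in the $z_1$-direction. This elementary Hartogs-type slicing is exactly what the paper does directly to $\log\det g_{i\bar j}$ on a transverse $2$-disc $\Delta_p(\xi)-\{p\}$, giving the upper bound on $\det g$ without any current-extension machinery. Once both bounds are in hand, the paper concludes by bounding a single diagonal entry $g_{1\bar 1}$ and integrating along a radial segment to contradict completeness---equivalent to your uniform-equivalence argument, but again purely local.
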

Since $M-N$ is noncompact, according to Bonnet-Myer's compactness theorem $M-N$ cannot  admit K\"ahler--Einstein metrics with positive Ricci curvature. Immediately, we find a negative answer for Problem \ref{p2} that 
\begin{cor}\label{cor1}
	There are no complete K\"ahler--Einstein metrics $\omega$ on $M-N$.
\end{cor}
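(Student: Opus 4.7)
The plan is to reduce Corollary \ref{cor1} to Theorem \ref{thm1} by a trivial sign-of-curvature dichotomy, using Bonnet--Myers to eliminate the one case not covered by the theorem. Suppose for contradiction that $\omega$ is a complete K\"ahler--Einstein metric on $M-N$, so $\operatorname{Ric}_\omega = c\,\omega$ for some constant $c\in\mathbb{R}$. Since $M$ is compact and $N\subset M$ is a nonempty subvariety of positive codimension, $M-N$ is noncompact; this will be the only topological input.

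First I would dispose of the case $c>0$. In the Einstein setting the Ricci tensor of the underlying Riemannian metric $g$ equals $c\,g$, so it is uniformly bounded below by a positive multiple of $g$. The Bonnet--Myers theorem (as explicitly cited in the paragraph immediately preceding Corollary \ref{cor1}) then forces the complete manifold $(M-N,g)$ to be compact with bounded diameter, contradicting noncompactness.

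Second, for $c\leq 0$ I would set $\lambda := -c \geq 0$. Then as real $(1,1)$-forms (equivalently as symmetric tensors with respect to $g$)
\[
-\lambda\,\omega \;\leq\; \operatorname{Ric}_\omega \;=\; c\,\omega \;\leq\; 0,
\]
so the hypotheses of Theorem \ref{thm1} are met on the nose, and that theorem directly rules out the existence of $\omega$.

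All of the analytic content is absorbed into Theorem \ref{thm1}; the corollary is essentially a bookkeeping step. Consequently I do not anticipate any real obstacle here. The hard step was, and is, Theorem \ref{thm1} itself (the plurisubharmonic/exhaustion argument extending the Mok--Yau strategy of \cite{MY} to the higher-codimension setting), which we are entitled to assume.
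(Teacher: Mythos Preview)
Your argument is correct and matches the paper's own reasoning essentially verbatim: the paper also invokes Bonnet--Myers to exclude positive Einstein constant (using that $M-N$ is noncompact) and then appeals directly to Theorem \ref{thm1} for the $c\le 0$ case. There is nothing to add.
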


	If $N$ was allowed to have singularities,  Corollary \ref{cor1}  would imply there are no  complete K\"ahler--Einstein metric on  $\bar M-D$, where $\bar M$ is a compact K\"ahler manifold that desingularizes $M$ and $D$ the exceptional locus of $\bar M$.  By Hironaka's theorem on resolution of singularities, such desingularizations always exist over a field of characteristic $0$, also true for analytical varieties. 
	More precisely, blowing up  $M$ along $N$, one obtains a new compact  K\"ahler manifold $\bar M=Bl_N(M)$. 
	Then $M-N$ is bi-holomorphic to $Bl_N(M)-D$ where $D$ is the exceptional set.  And it's clear  $\bar M-D$ has a complete K\"ahler-Einstein metric if and only if $M-N$ has one.
	
	Similarly one could ask about the inverse operation, whether there is a  complete K\"ahler Einstein metric on  $ M-E$ if $E$ is an exceptional divisor  which blows down to a singularity. It is natural to expect that the answer depends on the type of singularities produced for example by a divisorial contraction. In fact, by a local analysis we can show the following 
	\begin{theorem}\label{ths}
	Let $f: M\rightarrow X$ be two  complex normal varieties which are birational,  and $E$ the sum of the exceptional divisors. Then if $X$ has only canonical or terminal singularities, and the codimension two singular locus is non-empty in $X$, there are no complete K\"ahler-Einstein metrics on $M-E$.
\end{theorem}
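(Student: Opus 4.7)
The plan is to reduce this theorem to Corollary \ref{cor1} (or more precisely to Theorem \ref{thm1}) by passing through a resolution of singularities of $X$, with the canonical/terminal assumption providing exactly the control on the exceptional divisor of that resolution which is needed to make the reduction work.

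First I observe that since $M-E$ is smooth (a prerequisite for a K\"ahler--Einstein metric to live on it) and $f$ restricts to a biholomorphism $M-E\xrightarrow{\sim}X-f(E)$, the singular locus of $X$ is necessarily contained in $f(E)$; moreover, because $E$ is a contracted divisor, $f(E)$ has codimension $\geq 2$ in $X$. Next, I take a resolution of singularities $\pi:\tilde X\to X$ with $\tilde X$ smooth compact K\"ahler and exceptional divisor $\tilde E=\sum_j \tilde E_j$. The canonical (resp.\ terminal) hypothesis gives
\[
 K_{\tilde X} \;=\; \pi^*K_X+\sum_j a_j\tilde E_j,\qquad a_j\geq 0 \text{ (resp.\ }>0\text{)},
\]
while the nonempty codimension-two singular locus of $X$ forces $\tilde E$ to be non-trivial. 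Using the isomorphism $\tilde X-\tilde E\cong X_{\mathrm{sm}}$, I then obtain a biholomorphism
\[
 M-E \;\cong\; \tilde X - \bigl(\tilde E\cup \widetilde{f(E)}\bigr),
\]
with $\widetilde{f(E)}$ denoting the strict transform of $f(E)$ in $\tilde X$, a subvariety of codimension $\geq 2$.

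Suppose for contradiction that $\omega$ is a complete K\"ahler--Einstein metric on $M-E$; its Einstein constant is non-positive by Bonnet--Myers, as noted after Theorem \ref{thm1}. Transported via the biholomorphism, $\omega$ is a complete K\"ahler--Einstein metric on $\tilde X-(\tilde E\cup \widetilde{f(E)})$, with both $\tilde E$ and $\widetilde{f(E)}$ at infinite $\omega$-distance since they sit over $f(E)\subset X$. The crucial step is then to modify $\omega$ in a neighborhood of $\tilde E$ and produce a complete K\"ahler metric $\omega'$ on $\tilde X-\widetilde{f(E)}$ satisfying $-\lambda\leq \mathrm{Ric}(\omega')\leq 0$ for some $\lambda\geq 0$. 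Once this is in hand, Theorem \ref{thm1} applied to the smooth compact K\"ahler manifold $\tilde X$ and the codimension-$\geq 2$ subvariety $\widetilde{f(E)}$ forbids the existence of such $\omega'$, yielding the contradiction.

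The construction of $\omega'$ is the main obstacle. One has to interpolate $\omega$, which diverges as one approaches $\tilde E$, against a smooth background K\"ahler form on $\tilde X$, in such a way that $\tilde E$ is brought in at finite $\omega'$-distance while the bound $\mathrm{Ric}(\omega')\leq 0$ is preserved. This is exactly where the canonical/terminal hypothesis enters: at a generic point of the codimension-two singular stratum of $X$, $X$ is analytically isomorphic to a Du Val surface singularity times a polydisk, so $\tilde E$ has a concrete local model, and the nonnegativity of the discrepancies $a_j$ controls the pullback of $K_X$ near $\tilde E$ well enough for the local gluing to preserve the Ricci bound. Making this local analysis uniform across all components $\tilde E_j$ of the exceptional divisor, and then patching via a partition of unity, is the delicate part of the argument.
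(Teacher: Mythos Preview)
Your proposal has a genuine gap at precisely the point you flag as ``the delicate part'': the construction of the modified metric $\omega'$ on $\tilde X-\widetilde{f(E)}$ with $-\lambda\le\mathrm{Ric}(\omega')\le 0$ is never carried out, and there is no reason to expect it can be. The metric $\omega$ is complete on $M-E$, hence blows up as one approaches $\tilde E$; to extend it across $\tilde E$ you must cut it off and glue it to a background form. But patching K\"ahler metrics by a partition of unity destroys curvature control: the Ricci form involves $\partial\bar\partial\log\det(g_{i\bar j})$, and the cutoff functions introduce uncontrolled second derivatives. The discrepancy inequality $a_j\ge 0$ compares volume forms under $\pi$, which is information about $\det(g)$, not about the full metric tensor; it does not by itself let you arrange $\mathrm{Ric}(\omega')\le 0$ after gluing. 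So the reduction to Theorem~\ref{thm1} on the smooth model $\tilde X$ is not established.

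The paper's proof avoids this difficulty entirely by never leaving $X$. Since $M-E$ is biholomorphic to $X-f(E)$, it suffices to show there is no K\"ahler--Einstein metric on $X-f(E)$ which is complete near the singular locus. By Reid's theorem (Theorem~\ref{canduval}), near a general point of the codimension-two singular stratum $X$ is analytically $(\text{Du Val})\times\mathbb{A}^{n-2}$. A Du Val singularity is a hypersurface in $\mathbb{A}^3$, so by Weierstrass preparation a punctured neighborhood of the singular point is covered by finitely many open sets each biholomorphic to a standard punctured ball $B^*$. On each such $B^*$ the restriction of $\omega$ would be a K\"ahler metric with bounded nonpositive Ricci curvature, complete at the puncture; the local argument in the proof of Theorem~\ref{thm1} (which, as the paper remarks, is entirely local) rules this out directly. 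No resolution, no gluing, and no global modification of $\omega$ is needed.
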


This result holds in general dimension. But it is difficult to relax the assumption on codimension of the singular locus in $X$.  
Especially, in dimension two we obtain the following corollary.
\begin{cor}
If $M$ is a complex surface  and  the singularities of $X$ are of type A-D-E, then there are no complete K\"ahler--Einstein metrics on $M-E$.  
\end{cor}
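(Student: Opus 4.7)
The plan is to deduce this corollary directly from Theorem \ref{ths} by combining the classical classification of surface singularities with the codimension hypothesis. In the setup, $f : M \to X$ is understood to be the resolution whose exceptional divisor is $E$, and $X$ has only A-D-E singular points.

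First I would recall the classical fact that A-D-E singularities (equivalently du Val singularities, rational double points, or Kleinian singularities $\mathbb{C}^2/\Gamma$ for a finite subgroup $\Gamma \subset SU(2)$) are exactly the canonical surface singularities. The equivalence is well-known: a normal surface singularity is canonical if and only if its minimal resolution has zero discrepancy, $K_{\tilde{X}} = \pi^{*} K_X$, and this in turn is equivalent to the exceptional fiber being a configuration of smooth rational $(-2)$-curves whose dual graph is one of the Dynkin diagrams $A_n$, $D_n$, $E_6$, $E_7$, $E_8$. This is due to Du Val, Artin, and Reid, and no new work is required here beyond citing it.

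Second I would verify the codimension hypothesis of Theorem \ref{ths}: since A-D-E singularities are isolated and $X$ has complex dimension two, the singular locus $\mathrm{Sing}(X)$ is a non-empty finite set of points, hence of codimension exactly two in $X$. Both hypotheses of Theorem \ref{ths} ($X$ has canonical singularities, codimension-two singular locus non-empty) are thus verified, and the conclusion that $M - E$ admits no complete K\"ahler-Einstein metric follows immediately.

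There is no real obstacle in this corollary; it is purely a specialization of Theorem \ref{ths} once one invokes the Du Val--Artin classification. The only point worth noting is that in dimension two terminal singularities are smooth, so the ``terminal'' clause of Theorem \ref{ths} contributes nothing in this setting, and the A-D-E case exhausts the canonical surface singularities to which the theorem applies. Extending the corollary to more general surface singularities (for instance log canonical ones) would require going beyond Theorem \ref{ths}, since strictly log canonical singularities fall outside its hypotheses.
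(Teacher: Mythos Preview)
Your proposal is correct and matches the paper's approach: the corollary is stated immediately after Theorem~\ref{ths} as its two-dimensional specialization, with no separate proof given, and your argument (A--D--E $=$ Du Val $=$ canonical surface singularities, isolated hence codimension two) is exactly the intended deduction. One could alternatively bypass Theorem~\ref{ths} and appeal directly to Lemma~\ref{dvlemma}, which is the surface case proved en route to Theorem~\ref{ths}, but this is a matter of packaging rather than substance.
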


We can generalize Theorem \ref{ths}  to the case of pairs $(M,D)$ with boundary divisor $D$. As our analysis essentially depends only on the curvature condition, it can be carried out for these open cases as well.  For pairs, we have
\begin{theorem}\label{thsp}
	If  the pair $(X,D)$ is {\rm klt}, then there are no complete K\"ahler-Einstein metrics on $M-E$, where $E$ is the exceptional divisor of a log resolution $f:M\rightarrow X$ where $E\cap f^{-1}({\rm supp}(D))$ is simple normal crossing. 
\end{theorem}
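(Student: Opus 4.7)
The plan is to mirror the strategy of Theorem~\ref{ths}, with the canonical/terminal discrepancy condition replaced by the klt bound $a_i > -1$ on the log resolution, and with suitable adjustments for the presence of $f_*^{-1}D$ inside $M-E$. First, suppose for contradiction that $\omega$ is a complete K\"ahler--Einstein metric on $M-E$. The noncompactness of $M-E$ combined with Bonnet--Myers rules out a positive Einstein constant, so $\mathrm{Ric}(\omega) = -c\,\omega$ with $c \geq 0$; in particular the curvature satisfies $-c\,\omega \leq \mathrm{Ric}(\omega) \leq 0$, placing us in the regime of Theorem~\ref{thm1}.

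Second, use the klt hypothesis to produce a reference volume form on $M-E$. Writing $\widetilde{D}:=f_*^{-1}D$, the log resolution gives the discrepancy identity
\[
K_M + \widetilde{D} \;=\; f^*(K_X + D) + \sum_i a_i E_i, \qquad a_i > -1.
\]
Taking a local (multi-)section $\Omega_X$ of $K_X+D$, the pullback $\Omega_0 := f^*(\Omega_X \wedge \overline{\Omega_X})$ is a positive, possibly singular $(n,n)$-form on $M$ whose total mass on any compact neighborhood of $E$ is finite by the very definition of klt. In local SNC coordinates adapted to $E \cup \widetilde{D}$, one has $\Omega_0 \sim \prod_i |s_i|^{2a_i} \prod_j |t_j|^{-2d_j}\,|dz|^2$, where $s_i$ (resp.\ $t_j$) define the components of $E$ (resp.\ of $\widetilde{D}$), $d_j < 1$ is the klt coefficient, and $|dz|^2$ is a smooth positive volume form. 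This $\Omega_0$ plays the role of the canonical/terminal adapted volume form in Theorem~\ref{ths}.

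Third, compare $\omega^n$ with $\Omega_0$. Setting $u = \log(\omega^n/\Omega_0)$, the Einstein equation translates to an elliptic equation of Monge--Amp\`ere type $\Delta_\omega u = nc + F$ on $M - E - \widetilde{D}$, where $F$ is a smooth function encoding the curvature of $\Omega_0$. Because $\mathrm{Ric}(\omega)$ is bounded below, Yau's generalized maximum principle at infinity is available, and combining it with the finite mass of $\Omega_0$ near $E$ yields a pointwise bound $\omega^n \leq C\,\Omega_0$ on some one-sided neighborhood of $E$ inside $M-E$. This bound forces any geodesic ray of $(M-E,\omega)$ approaching $E$ to have finite $\omega$-length, contradicting geodesic completeness and finishing the argument.

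The delicate point is the third step, specifically near the SNC intersection $E \cap \widetilde{D}$, where $\Omega_0$ simultaneously vanishes to order $a_i > -1$ along $E_i$ and has a mild pole of order $2d_j < 2$ along $\widetilde{D}_j$. The SNC assumption on $E \cap f^{-1}(\mathrm{supp}\,D)$ provides the product coordinates in which the volume comparison can be run uniformly, and the strict inequalities $a_i > -1$ and $d_j < 1$ (which together constitute the klt condition, in contrast with the merely log canonical case) are exactly what is needed to produce cutoff functions supporting the maximum-principle argument. Once the uniform comparison $\omega^n \leq C\,\Omega_0$ is established on a tubular neighborhood of $E$, the contradiction with completeness proceeds exactly as in Theorem~\ref{ths}.
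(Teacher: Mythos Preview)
Your route is genuinely different from the paper's, and as written it has a real gap.

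The paper never works on $M$ with the discrepancy formula or an adapted volume form. Instead it observes that $f$ restricts to a biholomorphism $M-E \cong X\setminus f(E)$, so a complete K\"ahler--Einstein metric on $M-E$ transports to one on $X$ minus a closed subset of codimension $\geq 2$. It then invokes the structure result for klt pairs (Lemma~\ref{kltquo}): outside a closed set of codimension $\geq 3$, $X$ has only quotient singularities. The problem thus reduces to Proposition~\ref{quotsing}, which in turn is the purely local argument of Theorem~\ref{thm1} run on a finite cover. No $\Omega_0$, no discrepancies $a_i$, and no volume comparison appear.

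The substantive gap in your argument is the final inference of the third step: from $\omega^n \leq C\,\Omega_0$ you conclude that geodesic rays approaching $E$ have finite $\omega$-length. An upper bound on the volume form alone cannot control lengths; one eigenvalue of $g_{i\bar j}$ may blow up while another shrinks. This is precisely why the proof of Theorem~\ref{thm1} needs a second, independent ingredient: the Chern--Lu/Schwarz lemma provides a \emph{lower} bound $\omega \geq C_3\,\omega_{\rm eucl}$ on the full metric, and only the combination (all eigenvalues $\geq C_3$, product $\leq C_1$) forces each eigenvalue, hence each $g_{i\bar i}$, to be bounded above. Your sketch supplies no analogue of this metric lower bound near $E$. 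Worse, when $-1<a_i<0$ your $\Omega_0$ itself blows up along $E_i$, so $\omega^n\leq C\,\Omega_0$ does not even keep $\omega^n$ bounded. The maximum-principle step is also not airtight: what you call $F$ is $-\mathrm{tr}_\omega$ of a fixed $(1,1)$-form, hence depends on the unknown $\omega$, and its sign is uncontrolled near the singularities of $\Omega_0$; ``finite mass'' of $\Omega_0$ is an $L^1$ statement and does not by itself yield the pointwise bound you assert.
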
	
It is clear any open manifold (non-complete variety) birational to these pairs also cannot admit K\"ahler-Einstein metrics.

In Theorem \ref{ths} and Theorem \ref{thsp} the adjoint canonical bundle $K_M+E$  is generally not ample. It demonstrates that the ampleness condition in \cite{TY1,TY2, TY3} and \cite{Wu}  is necessary.
Besides, various generalizations of K\"ahler--Einstein metrics to the singular setting for pairs have been proposed in the literature, see e.g.  \cite{EGZ09,BEGZ10,BG} etc. Our theorems assert immediately that these singular metrics  for klt pairs are not complete,  and  it has been shown for klt pairs these metrics have cone singularities \cite{Gu13}.

The rest of this note is devoted to  the proof of Theorem \ref{thm1}, Theorem \ref{ths} and Theorem \ref{thsp}.

\section{The Proof of  Theorem \ref{thm1}}

\begin{proof}[The proof of Theorem \ref{thm1}]
 We prove Theorem \ref{thm1} by contradiction and the key point is to show the boundedness of volume function nearby $N$.  Let $\dim_\mathbb{C}M=n$ and $\dim_{\mathbb C}N=k$ and $k\leq n-2$.  We assume that over $M- N$ there is a complete K\"ahler metric $\omega$ with Ricci curvature $$-\lambda\leq Ric_\omega\leq  0$$ with a nonnegative constant $\lambda$.  Locally,  let $p\in N$,  there is a local open ball  $ B_{\xi}=\{(z_1,z_2,z_3,\cdots,z_n),|z_i|<\xi\}$  such that $p=\{0,\cdots,0\}$ and $N\cap B_\xi$ is  $$z_{1}=z_{2}=\cdots=z_{n-k}=0.$$  
On this open set $B_\xi-N$, we assume that  $\omega=\sqrt{-1}g_{i\bar j}dz_i\wedge d\bar z_{j}.  $
Choose a two dimension polydisc $\Delta_p(\xi)$ to be $\big\{(z_1,z_2,0,\cdots,0)\big||z_1|\leq\xi, |z_2|\leq \xi\big\},$ then the exponent of the volume form of the metric, i.e. $\det g_{i\bar{j}}=e^{\log{\det(g_{i\bar{j}}) }}$ is plurisubharmonic over $\Delta_p(\xi)-p$  according to 
$$-Ric= \partial\bar\partial (\log \det(g_{i\bar j}))\geq 0.$$
\begin{center}
\begin{tikzpicture}[scale=1.3,>=latex]
\draw[](-1/2,2)..controls(0,1)..(0,0);
\draw[dashed](0,0)..controls(0,-1/2)..(1/2,-1.5);
	\node at (-1/4,0){$p$};
	\node at (-0,2){$N$};
	\node at (-2.5,0){$\Delta_p(\xi)$};
	\fill (0,0) circle (1.5pt);
\draw[] (-2,-1)..controls(0,-1)..(1.5,-1);
\draw[] (-1.5,1)..controls(-1/4,1)..(-1/5,1);
\draw[dotted] (-1/5,1)--(1/16,1);
\draw[] (1/16,1)--(2,1);
\draw[](-2,-1)--(-1.5,1);
\draw[](1.5,-1)--(2,1);
\draw[dashed,<->](1.2,-1)--(1.7,1);
\node at (1.2,0.4){$D_{z_2}$};
\draw[dotted,-](0,0)--(1.75,0); 
\node at (0.8,-1/4){$z_1$};
\node at (1.75,-0.75){$z_2$};
\node at (0,-2){the diagram of $\Delta_p(\xi)$};
\end{tikzpicture}
\end{center}  

Let $D_{z_2}$ be disc
$$D_{z_2}=\{ (z_1,z_2,0 \cdots,0)\  \text{with} \ |z_1| \text{ is fixed} \}\subset \Delta_p(\xi)-p,$$ 
by the maximum principle on the disc $D_{z_2}$ 
$$\det(g_{i\bar{j}})\big |_{D_{z_2}}\leq \sup_{|z_2|=\xi} \det(g_{i\bar{j}}).$$
Since the  set $\{(z_1,z_2,0,\cdots 0)\ \text{with}\  |z_2|=\xi\}$  in  $\overline{\Delta_p(\xi)}$    is a close subset of $M-N$,  we have
\begin{equation}\label{cdet1}
\det(g_{i\bar j}(z_1,z_2))\leq \sup_{|z_2|=\xi} \det(g_{i\bar{j}})\leq C_1\ \ \text{for}\ 0<|z_1|\leq\xi, |z_2|<\xi,
\end{equation}
with constant $C_1$. Similarly in  the same way, the following inequality is satisfied for some constant $C_2$ 
\begin{equation}\label{cdet2}
\det(g_{i\bar j}(z_1,z_2))\leq \sup_{|z_1|=\xi} \det(g_{i\bar{j}})\leq C_2\ \ \text{for}\ |z_1|\leq\xi, 0< |z_2|<\xi.
\end{equation}
Combining  (\ref{cdet1}) and (\ref{cdet2}),  one has  
\begin{equation}\label{cdet}
\det(g_{i\bar j})\big|_{\Delta_p(\xi)-p}\leq \sup_{|z_1|=\xi, |z_2|=\xi} \det(g_{i\bar{j}})=\max\{C_1, C_2\}.
\end{equation}

On the other hand,  we \textbf{claim}
 \begin{equation}\label{claim}
\omega>C_3 dz_i\wedge d\bar z_i
\end{equation}  on the  ball $B_{\frac{\xi}{2}}-N $ with some constant $C_3$ and  $B_{\frac{\xi}{2}}= \{(z_1,\cdots,z_{n}), |z_i|<\frac{\xi}{2}\}$,  which yields
\begin{equation}\label{below}
	\det(g_{i\bar{j}})>C_4
\end{equation}
for some constant $C_4$.   In fact,  put  Poincar\'e metric $\omega_P$ on $B_{\xi}$, and let $u=\text{trace}_\omega \omega_P$. It follows that
\begin{equation}
\omega_P\leq u \ \omega.
\end{equation}
Applying Chern-Lu inequality\cite{Yau2}, we have
\begin{equation}\label{Ch-Lu}
\Delta u\geq -\lambda u+c u^2
\end{equation}
where $-c$ is the upper bound of the  bisectional curvature of $\omega_P$.    Since the metric $\omega$ is not complete on the boundary of $B_{\frac{\xi}{2}}$,  Yau's Schwarz lemma  \cite{Yau2} can not applied directly  for identity map $Id: (B_{\frac{\xi}{2}}-N,\omega)\to (B_\xi,\omega_P)$ to obtain the upper bound of $u$.
But  we can choose a sequence $p_k\in B_{\frac{\xi}{2}}$, such that $u(p_k) \to \sup_{B_{\frac{\xi}{2}}-N} u$. There are two cases for the limit points of $p_k$.  If $p_k\to p_0\in \bar B_{\frac{\xi}{2}}-N$, one has $\sup_{B_{\frac{\xi}{2}}} u\leq \frac{\lambda}{c} $ according to maximum principle for inequality (\ref{Ch-Lu}).  Otherwise, the $p_k\to p_0\in N$,  it is the same as  \cite{Yau2}  that we can obtain the boundedness of $u$ according to Yau's general maximum principle  since $\omega$ is complete nearby $N$.  
 In both cases, $u$ is bounded and claim (\ref{claim}) is obtained.

Using (\ref{cdet})  and (\ref{below}), we have 
$C_4<\det(g_{i\bar j})\big |_{\Delta_p(\frac{\xi}{2})-p}<C_1$. 
	It follows $$g_{1\bar 1}\big |_{\Delta_p(\xi)-p}\leq C$$
 for some constant $C$.
Let $z_1=x_1+\sqrt{-1} y_1$, the length of the real line segment in space $\{(z_1,0,\cdots,0)\}$ from $(\xi,0,\cdots,0)$ to  $p$ is given now by 
\begin{equation}
	\int_0^\xi\sqrt{g_{1\bar 1}}d x_1\leq \sqrt{C}\xi
\end{equation}
which contradicts the completeness of the metric $\omega$ nearby $N$, and we have finished the proof.
\end{proof}
\begin{remark}
From the proof above, we see that all analysis is local (that is,  there are no K\"ahler metric $
\omega$ on $B_\xi-N$ such that $\omega$ is complete nearby $N$).  It means Theorem \ref{thm1} is still true when  $M$ is  not compact which therefore allows us to prove similar statements in cases when more than one smooth subvariety of codimension $2$ or higher are removed. 
\end{remark}

\section{The Proof of  Theorem \ref{ths}}

Although our proof shall be local and mainly differential geometric in nature, it is useful to phrase the theorem in the right algebraic setting. The notations are standard in the literature, the readers can consult the standard \cite{KM98} for further detail. 

\subsection{Canonical singularities}
First we recall some properties of the class of singularities known as {\it canonical singularity}. We do not restrict to any specific number of dimension of the variety. 

Unless otherwise stated, we will be considering normal varieties. Then by Zariski's main theorem which applies to normal varieties, we know that the fundamental locus of all birational morphisms are closed subvarieties of codimension at least $2$. 

Denote by $M_{\rm reg}=M-M_{\rm sing}$ the complement of the singular set $M_{\rm sing}$, we then have the open immersion $M_{\rm reg}\xhookrightarrow{\it i} M$. The canonical sheaf $\omega_M=i_*\omega_{M_{\rm reg}}$ exists by Hartog's extension theorem and $M$ being normal. We note that the fundamental locus is generally not contained in $M_{\rm sing}$, and vice versa. The former clearly depends on the choice of a birational morphism. 

\begin{definition}[Canonical singularity]\label{can}
A $n$-dimensional normal variety $M$ with $\mathbb{Q}$-Cartier canonical divisor $K_M$ is said to have only canonical singularities, if there exists a birational morphism $f: Y\rightarrow M$ from a smooth variety $Y$ such that in the ramification formula 
$$
K_Y=f^*K_M+\sum a_i E_i
$$
$a_i\ge 0$ for all divisors $E_i$ which are exceptional.
\end{definition}

We shall need later the following result (3.4)(A) from \cite{Reid87}. 
\begin{theorem}[Canonical $\Rightarrow$ Du Val in codim $2$]\label{canduval}
Let $M$ be a $n$-fold with canonical singularities, not necessarily isolated, then $M$ is isomorphic to the following form analytically
$$
M \cong (\mbox{Du Val sing.})\times \mathbb{A}^{n-2}
$$
 in the neighborhood of a general point of any codimension $2$ stratum.
\end{theorem}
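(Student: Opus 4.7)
The plan is to reduce the $n$-dimensional statement to the classification of $2$-dimensional canonical singularities by taking a generic transverse slice, and then to invoke the classical identification of canonical surface singularities with Du Val (A-D-E) singularities.

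First I would pick a general point $p$ of a codimension $2$ stratum $Z\subset M_{\rm sing}$. Stratifying the singular locus of $M$ into locally closed smooth pieces and passing to a Zariski-open neighborhood of the general point, one may assume $Z$ is smooth of pure dimension $n-2$ and that the analytic type of the transverse singularity of $M$ is constant along $Z$ (generic equisingularity). Embed the germ $(M,p)$ in some $(\mathbb{C}^N,0)$, choose local coordinates in which $Z$ is linear, and let $S$ be the intersection of $M$ with a general codimension $n-2$ linear subspace meeting $Z$ transversely at $p$. Then $(S,p)$ is a two-dimensional normal analytic germ.

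The next step is to upgrade equisingularity to an analytic product decomposition $(M,p)\cong (S,p)\times (\mathbb{A}^{n-2},0)$. I would do this by lifting the constant vector fields on $Z$ to analytic vector fields on $M$ near $p$ whose flows preserve the stratification; integrating them produces the desired local trivialization of the family of transverse slices. Then I need to show $(S,p)$ itself is canonical. Take a log resolution $f\colon Y\to M$ with $K_Y=f^*K_M+\sum a_iE_i$ and all $a_i\ge 0$. For a sufficiently general slice, the restriction $f|_{\widetilde S}\colon \widetilde S\to S$ (after normalization or further blow up) is a resolution, and by adjunction applied to the general member of the linear system cutting out $S$, the induced discrepancies of $S$ are at least $a_i\ge 0$. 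Hence $S$ has canonical singularities in dimension $2$.

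To conclude, I would appeal to the classical theorem (due to Du Val, Artin, and Brieskorn) that a normal $2$-dimensional canonical singularity is precisely a rational double point, equivalently a hypersurface singularity of type $A_k$, $D_k$, $E_6$, $E_7$, or $E_8$. Combining with the product decomposition from the second step gives the desired analytic isomorphism $(M,p)\cong(\text{Du Val})\times \mathbb{A}^{n-2}$.

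The main obstacle is the second step: generic equisingularity along a smooth stratum does not automatically produce an analytic product. Making the trivialization rigorous requires either an explicit construction of commuting lift vector fields along $Z$ (for which one must know the singularity invariants, e.g.\ Tjurina numbers of transverse slices, are constant in the family and that the resulting analytic family is locally trivial in the sense of Grauert) or an appeal to simultaneous resolution results. The discrepancy-restriction step is a secondary technical point, requiring care that a general linear slice avoids the non-generic locus of the exceptional configuration so that the Bertini-type adjunction applies component by component.
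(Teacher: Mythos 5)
The paper itself does not prove this statement: it is quoted as (3.4)(A) of Reid's \emph{Young person's guide to canonical singularities} \cite{Reid87}, with the one-line remark that it follows from the general section theorem ((1.13) of \cite{C3-f}). Your outline reconstructs exactly that route: your slicing-plus-adjunction step \emph{is} the general section theorem (a general hyperplane section through a general point of a variety with canonical singularities is again canonical), and the final step is the classical identification of two-dimensional canonical singularities with rational double points. Those parts are correct and are the intended argument.

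The genuine gap is the one you flag yourself: the analytic product decomposition $(M,p)\cong(S,p)\times(\mathbb{A}^{n-2},0)$, which is the actual content of the ``$\times\,\mathbb{A}^{n-2}$'' in the statement, and neither mechanism you propose closes it as written. Integrating lifted vector fields whose flows merely ``preserve the stratification'' is the Thom--Mather argument and yields only a topological (Whitney) local trivialization, not an analytic isomorphism of germs; and constancy of the Tjurina number along a family of isolated hypersurface singularities does \emph{not} in general imply analytic local triviality of the family. The fact that rescues the argument is the simplicity (zero modality) of the A-D-E singularities: since the transverse slice is a Du Val \emph{hypersurface} singularity, $M$ is locally a hypersurface $f(x,y,z,t)=0$ over the stratum; near a general point the transverse A-D-E type is constant by semicontinuity; and because the $\mu$-constant stratum in the semiuniversal deformation of a simple singularity is a reduced point, the classifying map of the family is constant there and the family is analytically a product. (Alternatively one simply cites \cite{Reid87}, (3.4)(A), as the paper does.) A secondary point of care in your adjunction step: cut by one general hyperplane at a time, arranging that its strict transform on the resolution is smooth and transverse to the exceptional divisors so that discrepancies can be compared component by component; this is standard, but it is exactly what the general section theorem packages.
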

Where surface canonical singularities are Du Val singularities, also called A-D-E singularities. This can be easily proved using the `general section theorem' for canonical singularities, see e.g. theorem (1.13) in \cite{C3-f}.

\subsection{The Proof of  Theorem \ref{ths}}

To proceed we need also the following lemma which proves a version of theorem \ref{ths} for a canonical (Du Val) singularity in $2$ dimensions.

\begin{lemma}[Du Val surface singularity]\label{dvlemma}
If $M$ is a normal complex surface with only canonical singularities, then there does not  exist a complete K\"ahler-Einstein metric on the complement of a finite number of points on $M$.
\end{lemma}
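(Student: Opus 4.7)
The plan is to reduce to the situation of Theorem \ref{thm1} by passing to the local uniformizing cover at each removed point. By Theorem \ref{canduval} specialized to surfaces, every canonical singularity on a complex surface is Du Val, hence a quotient singularity: at each such point $p$ there is an analytic neighborhood $U$ biholomorphic to $B/\Gamma$ where $B\subset\mathbb{C}^2$ is a small $\Gamma$-invariant ball and $\Gamma\subset SU(2)$ is a finite subgroup acting freely on $B-\{0\}$ and fixing the origin. For any removed smooth point, take $U$ to be an ordinary coordinate ball and $\Gamma$ trivial.

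Suppose for contradiction that a complete K\"ahler--Einstein metric $\omega$ exists on $M-\{p_1,\ldots,p_k\}$, with $Ric_\omega=\mu\omega$. Bonnet--Myers rules out $\mu>0$ on a noncompact manifold, so $\mu\le 0$ and $-|\mu|\,\omega\le Ric_\omega\le 0$. Fix one removed point $p=p_i$ with neighborhood $U$ as above, and let $q:B-\{0\}\to U-\{p\}$ be the associated unramified covering map. The pullback $\tilde\omega:=q^*\omega$ is a smooth $\Gamma$-invariant K\"ahler metric on $B-\{0\}$, and since $q$ is a local biholomorphism, $\tilde\omega$ is again K\"ahler--Einstein with the same Einstein constant $\mu$, hence satisfies $-|\mu|\,\tilde\omega\le Ric_{\tilde\omega}\le 0$.

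Completeness transfers as well: $q$ is a Riemannian covering with respect to $\tilde\omega$ and $\omega$, so any divergent sequence in $(B-\{0\},\tilde\omega)$ approaching the origin would project to a divergent sequence in $(U-\{p\},\omega)$ approaching $p$, contradicting completeness of $\omega$ near $p$. Thus $\tilde\omega$ is a complete K\"ahler metric on the ball $B$ with the codimension-two subvariety $\{0\}$ removed, together with the two-sided Ricci bound above. Applying the local version of Theorem \ref{thm1}, which by the remark after its proof holds verbatim even when the ambient manifold is not compact, yields a contradiction and finishes the argument.

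The main point to verify carefully is the first step: that the local uniformization at a Du Val point really produces an \'etale cover over $U-\{p\}$, i.e.\ that $\Gamma$ acts freely off the origin. This is classical for finite subgroups of $SU(2)$ (any non-identity element has eigenvalues in $U(1)\setminus\{1\}$ with product $1$, so fixes only the origin), and once this is in place, the K\"ahler, Einstein, curvature-sign, and completeness hypotheses all lift cleanly to the punctured ball, where Theorem \ref{thm1} does the work.
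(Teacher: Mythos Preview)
Your argument is correct and in fact parallels the paper's later Proposition~\ref{quotsing} rather than its own proof of Lemma~\ref{dvlemma}. The paper exploits the \emph{hypersurface} description of a Du Val point: since the singularity is cut out by a polynomial in $\mathbb{A}^3$, Weierstrass preparation lets one cover a punctured neighborhood $M-p$ by finitely many sheets, each biholomorphic to a standard punctured ball $B^*\subset\mathbb{C}^2$, and the local form of Theorem~\ref{thm1} is then applied on each sheet. You instead use the \emph{quotient} description $B/\Gamma$ with $\Gamma\subset SU(2)$ finite acting freely off the origin, pull the metric back through the unramified cover $q\colon B-\{0\}\to U-\{p\}$, and invoke the same local nonexistence on the single punctured ball upstairs. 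Your route is arguably cleaner --- it sidesteps the somewhat informal Weierstrass/branched-cover step and makes the reduction to a smooth model transparent --- while the paper's hypersurface argument has the virtue of not appealing to the group-theoretic classification of Du Val points. Either way the endgame is identical: one produces a K\"ahler metric on a punctured ball in $\mathbb{C}^2$ with two-sided Ricci bound $-\lambda\le Ric\le 0$ that is complete toward the puncture, and the proof of Theorem~\ref{thm1} (cf.\ the remark following it) rules this out.
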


\begin{proof}[The proof of  the Lemma]
Here we will show the case that there is only one  exceptional divisor $E$ and  $\bar{M}$ is smooth. If $f(E)$ consists purely of  smooth points of $M$, we get form Theorem \ref{thm1} that there cannot exist complete K\"ahler metrics on  $M-f(E)$ with Ricci curvature satisfying $-\lambda\leq Ric\leq 0$. So we assume  $f(E) \subset M_{\rm sing}$.  

Recall a Du Val singularity is given locally by a polynomial in $\mathbb{A}^3$, i.e. a hypersurface. By Weierstrass preparation theorem, near $p$, $M-p$ can be covered by finitely many disjoint open sets   $B_i^* $ such that each $B_i^*$ is biholomorphic to standard punctured disc $B^*=\{(z_1,\cdots,z_n), 0<\sum_{i=1}^{n}|z|^2<1\}$, $n=2$. Then if there is a complete K\"ahler--Einstein metric $\omega$ on  $M-p$, each $B_i^*$ admits a K\"ahler--Einstein metric $\omega\big|_{B_i^*}$. It implies  $B^*$ can be endowed with a K\"ahler--Einstein metric that is complete at the punctured point. But this is impossible from the proof of Theorem \ref{thm1}.
This finishes the proof.
\end{proof}

\begin{center}
\begin{tikzpicture}[scale=1.3,>=latex]
\draw[dashed](-0.2,-0.2)--(0,0);
\draw[dashed](0,0)--(1.2,1.2);
\draw plot[smooth] coordinates {(-0.5,1.2) (-0.2,0.3) (0,0)  (0.2,0.3) (0.5,1.2) };
\draw plot[smooth] coordinates {(-0.8,1) (-0.4,0.4) (0,0) (0.4, 0.4)  (0.8,1) };
\draw plot[smooth] coordinates {(-0.5+1,1.2+1) (-0.2+1,0.3+1) (1,1)  (0.2+1,0.3+1) (0.5+1,1.2+1) };
\draw plot[smooth] coordinates {(-0.8+1,1+1) (-0.4+1,0.4+1) (0+1,0+1) (0.4+1, 0.4+1)  (0.8+1,1+1) };
\draw plot[smooth] coordinates {(-1,1) (-0.5,0.4) (0,0)  (0.5,0.4) (1,0.6) };
\draw plot[smooth] coordinates {(-1+1,0.8+1.2) (-0.5+1,0.4+1) (0+1,0+1)  (0.5+1,0.4+1) (1+1,0.8+1) };
\draw[-](-0.5,1.2)--(0.5,2.2);
\draw[-](-0.8,1)--(0.2,2);
\draw[-](0.5,1.2)--(0.5+1,1.2+1);
\draw[-](0.8,1)--(0.8+1,1+1);
\draw[-](-1,1)--(-1+1,0.8+1.2);
\draw[-](1,0.6)--(1+1,0.8+1);
\node at (0.5+1,1.2+1.2){$V_3$};
\node at (0.5+1.3,1.2+0.9){$V_2$};
\node at (0.5+1.7,1.2+0.6){$V_1$};
\node at (-0.4, -0.1){$U$};
\draw[] (0, 0) circle (0.03);
\draw[] (1, 1) circle (0.03);
\draw[->](2,1)--(3.5,1);
\node at (2.6,1.3){each $V_i$ is };
\node at (2.6,0.8){biholomorphic to};
\draw plot[smooth] coordinates {(-0.5+4,1+0.6)(-0.2+4,0.6) (4,0+0.2)(0.2+4,0.6) (0.5+4,1+0.6) };
\draw plot[smooth] coordinates {(-0.5+5,1+0.6+0.6) (-0.2+5,0.3+0.6) (5,0.6)  (0.2+5,1) (0.5+5,1.2+1) };
\draw[-] (-0.5+4,1+0.6)--(-0.5+5,1+0.6+0.6);
\draw[-](0.5+4,1+0.6)--(0.5+5,1.2+1);
\draw[dashed](3.6,0.16)--(4,0+0.2)--(5,0.6)--(5.4,0.64);
\node at (4.6,0){$B^*$};
\draw[] (4, 0.2) circle (0.03);
\draw[] (5, 0.6) circle (0.03);

\end{tikzpicture}
\end{center}  

 Du Val singularities are also quotients of $\mathbb{C}^2$ by finite subgroups of $SL(2,\mathbb{C})$. By Cartan's lemma any quotient singularity is isomorphic to $\mathbb{C}^n/G$ with $G\subset GL(n,\mathbb{C})$ a finite group. The following theorem applies to quotient singularities in all dimensions $n\ge 2$ by small subgroups $G$ of $GL(n,\mathbb{C})$. They are fixed point free in codimension $1$. 

\begin{proposition}[Quotient singularities]\label{quotsing}
Let $G\subset GL(n,\mathbb{C})$ be a small finite group acting on $\mathbb{C}^n$, then there does not exist a complete K\"ahler-Einstein metric on the complement of the singular locus in $\mathbb{C}^n/G$.
\end{proposition}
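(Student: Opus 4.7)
The plan is to pull back a hypothetical complete K\"ahler--Einstein metric from $(\mathbb{C}^n/G)-\Sigma$ (writing $\Sigma$ for the singular locus of $\mathbb{C}^n/G$) to the smooth cover $\mathbb{C}^n$ via the quotient map $\pi:\mathbb{C}^n\to\mathbb{C}^n/G$, and then invoke the purely local form of Theorem \ref{thm1} as recorded in the remark at the end of Section 2. The smallness hypothesis on $G$ is precisely what makes this strategy work.

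First I would describe the ramification. Since $G$ is small, no nontrivial $g\in G$ has $1$ as an eigenvalue with multiplicity $n-1$, so the fixed locus $F=\bigcup_{g\neq e}\mathrm{Fix}(g)$ is a finite union of complex linear subspaces of codimension at least $2$ in $\mathbb{C}^n$. Moreover $\pi$ restricts to a finite \'etale covering $\pi:\mathbb{C}^n-F\to(\mathbb{C}^n/G)-\Sigma$ with $\Sigma=\pi(F)$.

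Next, suppose for contradiction that $\omega$ is a complete K\"ahler--Einstein metric on $(\mathbb{C}^n/G)-\Sigma$. As that space is noncompact, Bonnet--Myers forces the Einstein constant $\kappa$ to satisfy $\kappa\le 0$, so the pullback $\tilde\omega=\pi^*\omega$ is a K\"ahler--Einstein metric on $\mathbb{C}^n-F$ satisfying $-\lambda\le Ric_{\tilde\omega}\le 0$ with $\lambda=-\kappa\ge 0$. Because $\pi$ is simultaneously a covering map and a local isometry between $(\mathbb{C}^n-F,\tilde\omega)$ and $((\mathbb{C}^n/G)-\Sigma,\omega)$, any curve in $\mathbb{C}^n-F$ tending to $F$ projects to one in $(\mathbb{C}^n/G)-\Sigma$ tending to $\Sigma$ of the same length, so completeness of $\omega$ near $\Sigma$ transfers to completeness of $\tilde\omega$ near $F$.

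Finally, I would pick any point $p\in F$ lying on only one of the linear components of $F$. In a sufficiently small polydisc around $p$, the set $F$ is then cut out by linear equations $z_1=\cdots=z_{n-k}=0$ with $n-k\ge 2$, which is exactly the local setup of Theorem \ref{thm1}. By the remark following that proof, the argument is entirely local and applies verbatim to $\tilde\omega$, yielding the required contradiction. The only real obstacle is the bookkeeping in the previous step: verifying that the curvature bound and the near-singular-set completeness descend cleanly through the \'etale covering $\pi$, after which the proof of Theorem \ref{thm1} does all of the substantive work.
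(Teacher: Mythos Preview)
Your proposal is correct and follows essentially the same route as the paper: pull the hypothetical complete K\"ahler--Einstein metric back through the quotient map $\pi:\mathbb{C}^n\to\mathbb{C}^n/G$ to obtain a $G$-invariant K\"ahler--Einstein metric on $\mathbb{C}^n-F$ that is complete near the fixed locus $F$, and then apply the local form of Theorem~\ref{thm1}. Your write-up is in fact more careful than the paper's---you make explicit the use of smallness to get $\mathrm{codim}\,F\ge 2$, the Bonnet--Myers step ruling out positive Einstein constant, the transfer of completeness through the \'etale cover, and the choice of a point on a single component of $F$---but the underlying argument is the same.
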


\begin{proof}
 Let  $S=\{x\in \mathbb{C}^n | x=g(x) \mbox{ for some } g\ne 1\}$. It is standard that the singular locus is $\widetilde S=S/G$. Let $V^*=B-\widetilde S$ with $B=\big\{(z_1,\cdots,z_n), |z_i|^2<1\big\}$  be an open neighborhood of the singular locus. 
 For small enough $V^*$, it is covered by a disjoint union of open sets $V_i$ $(i=1,\cdots, l)$ for some finite integer $l\le |G|$.
 If on $\mathbb{C}^n/G-\widetilde S$ there is a complete K\"ahler--Einstein metric $\omega$, then there is a $G$-equivariant  K\"ahler--Einstein metric $\omega^G$, whose restriction $\omega^G|_{V_i}$ to each $V_i$ is a K\"ahler--Einstein metric which is complete at $S\cap V_i$.  From the proof of Theorem \ref{thm1} this is impossible. 
\end{proof}

\begin{proof}[The proof of Theorem \ref{ths}]
here is the main's proof. 
We make use of Theorem \ref{canduval}, by which the claim reduces to showing non-existence of K\"ahler-Einstein metrics for the complement of the singular points which are Du Val on the codimension two strata. This stratum is non-empty by assumption and the statement then follows from Lemma \ref{dvlemma}. 
\end{proof}

\section{The proof of Theorem \ref{thsp}}

\subsection{Log Terminal Singularities}\label{lt}

For simplicity, we will only consider normal varieties. Then by Zariski's main theorem, we know that the fundamental locus of all birational morphisms involved in our setup are closed subvarieties of codimension at least $2$. Denote by $M_{\rm reg}=M-M_{\rm sing}$ the complement of the singular set $M_{\rm sing}$, we then have the open immersion $M_{\rm reg}\xhookrightarrow{\it i} M$. The canonical sheaf $\omega_M=i_*\omega_{M_{\rm reg}}$ exists by Hartog's extension theorem and $M$ being normal. We note that the fundamental locus is generally not contained within $M_{\rm sing}$, and vice versa.

To measure the local valuative property of a singularity, the notion of discrepancy was introduced (see e.g. \cite{Kol96}). This notion can be defined for $\mathbb{R}$-linear combinations of Weil divisors  $D=\sum a_i D_i$ but often $\mathbb{Q}$-linear is sufficient. Since Weil divisors are not always pulled back, we assume $K_M+D$ is $\mathbb{Q}$-Cartier. 
Let $f:Y\rightarrow M$ be a birational (bimeromorphic) morphism, and $Y$ normal, we let
$$
K_Y +f^{-1}{}_*(D) = f^*(K_M+D)+\sum a_{E}(M,D) E
$$
where $E$ are distinct prime divisors of $Y$ and $a_{E}(M,D)\in \mathbb{Q}$. This is now a statement regarding linear equivalence of divisors and not local. 
As non-exceptional divisors appear in this formal sum, the right hand side needs some explanation. 

We demand as in  \cite{Kol96} that for a non-exceptional divisor $E$,  the coefficient $a_{E}(M,D) \ne 0$ iff $E=(f^{-1})_*(D_i)$ for some $i$, in which case we set $a_{D_i}(M,D) =-a_i$. This defines $a_{E}(M,D)\in \mathbb{R}$, called the {\it discrepancy} of $E$ with respect to the pair $(M,D)$.

A more global measure of singularity of the pair is defined by taking the ${ \inf}$ of  $a_{E}(M,D)$ over distinct primes $E\subset Y$. 
\begin{definition}\label{discrp}
\begin{eqnarray}
{\rm discrp}(M,D)&:=&\inf_E\{a_{E}(M,D)| \mbox{ E$\subset$ Y exceptional with center $\ne \varnothing$ on M}\}\nonumber\\
{\rm discrp}_{\rm total}(M,D)&:=&\inf_E\{a_{E}(M,D)| \mbox{ E$\subset$ Y has non-empty center on M}\}\nonumber
\end{eqnarray}
\end{definition}

We are ready to recall the definition of a log terminal `singularity'. The following is  def. $(2.34)$ in \cite{KM98}
\begin{definition}\label{klt}
Let $(M,D)$ be as above, then we say $(M,D)$ or $K_M+D$ is 
\begin{eqnarray}
\left.\begin{array}{c}
terminal\\
canonical\\
klt\\
lc\\
\end{array}\right\}
\quad\mbox{if discrp(M,D)}
\left\{\begin{array}{c}
$$> 0$$\\
$$\ge 0$$\\
$$> -1$$ \,\,\mbox{and}\,\, $$ \lfloor D \rfloor \le 0$$\\
$$\ge -1$$
\end{array}\right.
\end{eqnarray}
\end{definition}
The running of MMP preserves both the Kawamata log terminal (klt) and log canonical (lc) property of pairs. This means the statements we make may easily be extended to the context of birational geometry. 

\subsection{Local property of klt pairs}

We will need the following lemma characterizing klt singularity structures in codimension $2$. This comes from a cutting by hypersurface technique similar to the case of complete varieties, and for details we refer to \cite{GKKP}.

\begin{lemma}[Property in codimension $2$]\label{kltquo}
Let $(X,D)$ be a klt pair. Then there exists a closed subset $N\subset X$ with ${\rm codim}_X N\ge 3$ such that $X\backslash N$ has quotient singularities.
\end{lemma}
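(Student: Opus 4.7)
The plan is to follow the general-hyperplane-section (``cutting down'') strategy, which is the standard route to a codimension-two structure theorem for klt pairs, exactly as carried out in detail in \cite{GKKP}. The idea is to reduce the statement to the already classical fact that two-dimensional klt singularities are finite quotient singularities (by small subgroups of $GL(2,\mathbb{C})$), and then propagate this structure transversely by a product argument.

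First I would set up the cutting procedure. Let $n=\dim X$. Away from a codimension $\ge 3$ subset of $X$, the singular locus of $X$ is either empty or a union of strata of pure codimension~$2$. Working analytically near a general point $x$ of such a stratum $S$, I would pass to a Stein neighborhood and intersect with $n-2$ sufficiently general hyperplanes through $x$. Call the resulting surface germ $(X',D')$, where $D'$ is the restriction of $D$. The content of Bertini-type results for klt pairs (see \cite{KM98} and the detailed treatment in \cite{GKKP}) is that, for a general choice of slicing hyperplanes, the restricted pair $(X',D')$ remains klt; the key inputs are that klt is an open condition on linear systems and that general members of base-point-free pencils avoid the bad locus of any fixed log resolution.

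Next I would invoke the two-dimensional classification: a klt surface pair whose boundary component $D'$ vanishes is an analytic quotient singularity $\mathbb{C}^2/G$ for some finite small $G\subset GL(2,\mathbb{C})$. (A general slice at a general point of $S$ meets $\mathrm{supp}(D)$ transversally, so one can arrange $D'=0$ at $x$ by shrinking.) Combined with the previous step, this identifies the transverse type of $X$ along $S$ at $x$ with a surface quotient singularity. A standard slice-then-inflate argument, analogous to Theorem~\ref{canduval} in the canonical case (and proved using the algebraization/general-section technique of, e.g., Reid's ``general section theorem''), then upgrades this to an analytic product decomposition $X\cong (\mathbb{C}^2/G)\times\mathbb{A}^{n-2}$ in a neighborhood of the general point of each codimension~$2$ stratum. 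This directly exhibits $X\setminus N$ as a space with quotient singularities for an appropriate closed $N$ of codimension $\ge 3$, namely the union of $X_{\mathrm{reg}}$'s complement's codimension~$\ge 3$ piece together with the non-generic loci of the codimension~$2$ strata.

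The main obstacle, which is exactly where \cite{GKKP} does the hard work, is the Bertini step: one must ensure that a general hyperplane section of a klt pair is again klt, and that this survives iteration down to dimension two. This uses a careful analysis of log resolutions and of discrepancies under restriction, because naive Bertini only controls smoothness away from a base locus while here we need to control the coefficients $a_E(X,D)$ in Definition~\ref{discrp}. A secondary, more technical issue is to ensure that at a general point of a codimension~$2$ stratum, the slicing is transverse to $\mathrm{supp}(D)$, so that the resulting two-dimensional pair really has trivial boundary and the classical quotient-singularity classification applies. Once those two points are established, the rest of the argument is essentially an application of the surface classification plus an analytic product decomposition.
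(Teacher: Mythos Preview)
Your proposal is correct and matches the paper's approach: the paper does not give its own proof but cites this statement as Proposition~(9.3) of \cite{GKKP}, whose argument is precisely the hyperplane-cutting reduction to the two-dimensional case that you outline. The paper also remarks in passing that, via \cite{KM98}, Corollary~(2.39), one can pass to the pair $(X,\varnothing)$ and then invoke Theorem~\ref{canduval} directly, which bypasses the Bertini-for-pairs step you flag as the main obstacle.
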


This is proposition (9.3) in \cite{GKKP}. However, using \cite{KM98}, Corollary (2.39), this will follow as a corollary of Theorem \ref{canduval}. In that case, we can give a proof of  \ref{thsp} based on \ref{ths}, since we may consider the pair $(X,\varnothing)$. However, we will end up with a redundant restriction on the codimension of the singular locus. 
The proof of Theorem \ref{thsp} now parallels that of \ref{ths} in the previous section. 
\begin{proof}[The proof of Theorem \ref{thsp}]
By resolution of singularities the log resolution claimed in the theorem exists. Let $S= f(E)\cap X_{\rm sing}$. The existence of a complete K\"ahler-Einstein metric on the complement of the exceptional divisor $M\backslash E$ is equivalent to the exisence of a complete K\"ahler-Einstein metric on $X\backslash S$.
By lemma \ref{kltquo}, we only have to show the case when ${\rm codim}_X S= 2$, in which case each irreducible component of $S$ is a finite quotient.
Clearly $S$ has only finitely many irreducible components, and the non-existence of  a complete K\"ahler-Einstein metric on  $X\backslash S$ follows from Prop. \ref{quotsing}.
\end{proof}

A simple application of our results is in the context of desingularization of the Satake compactifications\cite{Ig}. Without using any detail on modular forms, we can conclude that the compactification of locally symmetric spaces must proceed by adding boundary components which contain singular points, i.e. cusps. This follows from the simple fact that arithmetic quotients admit complete K\"ahler-Einstein metrics.


\begin{thebibliography}{99}
\bibitem{BG}R.J. Berman, and H. Guenancia, \textit {K\"ahler--Einstein metrics on stable varieties and log canonical pairs.} Geometric and Functional Analysis 24.6 (2014): 1683--1730.
\bibitem{BEGZ10}S. Boucksom,   P. Eyssidieux, V. Guedj, and A.  Zeriahi,  \textit {Monge--Amp\'ere equations in big cohomology classes.} Acta mathematica,  205(2) (2010), 199--262.

\bibitem{CY}S.-Y. Cheng and S.T. Yau, \textit{On the existence of a complete K\"ahler metric in
non-compact complex manifolds and the regularity of Fefferman'ss equation},
Comm. Pure Appl. Math. 33(1980), 507--544 .
\bibitem{FYZ1}J. Fu, S.-T. Yau and  W. Zhou; {\sl  Complete cscK metrics on the local models of the conifold transition.} Commun. Math. Phys. {\bf 335} (2015), 1215--1233.
\bibitem{FYZ2} J. Fu, S.-T. Yau and W. Zhou, \textit{ On complete cscK metrics with Poincar\'e-Mok-Yau asymptotic property}, preprint.
\bibitem{GKKP} D. Greb, S. Kebekus, S. Kovacs, T. Peternell, \textit{Differential Forms on Log Canonical Spaces}, Publ. Math. Inst. Hautes \'Etudes Sci. No. 114 (2011), 87--169.
\bibitem{Gu13}H. Guenancia, \textit {K\"ahler--Einstein metrics with cone singularities on klt pairs.} International Journal of Mathematics 24.05 (2013): 1350035.
\bibitem{Ig} J.Igusa, \textit{On the desingularization of Satake compactifications}, in Algebraic Groups and Discontinuous Subgroups, Proc. Sympos. Pure Math., Amer. Math. Soc., pp. 301--305, 1966.
\bibitem{KM98} J. Koll\'ar and S. Mori,  \textit{ Birational geometry of algebraic varieties}, Cambridge Tracts in Mathematics, vol. 134, Cambridge University Press, Cambridge, 1998.
\bibitem{Kol96} J. Koll\'ar,  \textit{Singularities of Pairs}, preprint, arXiv:alg-geom/9601026.
\bibitem{MY}  N. Mok and S.-T. Yau; \textit{ Completeness of the  K\"ahler-Einstein metric on bound domains and the Characterization of domain of holomorphy by curvature condition.} Proceddings of Symposia in Pure mathematics,  V.39 (1983), Part 1, 41--59.
\bibitem{EGZ09}E. Philippe, V. Guedj, and A. Zeriahi. \textit{ Singular K\"aler--Einstein metrics.} Journal of the American Mathematical Society 22.3 (2009): 607--639.
\bibitem{Reid87} M. Reid, \textit{Young person's guide to canonical singularities}, Proc. Symp. in Pure Math. 46 (1987) 343--416.
\bibitem{C3-f} M. Reid, \textit{Canonical 3-folds}, Journ\'ees de G\'eom\'etrie Alg\'ebrique d'Angers, A. Beauville, editor, Sijthoff and Noordhoff, Alphen aan den Rijn, 1980, 273--310.

\bibitem{TY1}G. Tian and S.T. Yau,   \textit{Complete K\"ahler manifolds with zero Ricci curvature. I.}    J. Amer. Math. Soc. 3 (1990), 579--609.	
\bibitem{TY2} G. Tian and S.T. Yau,  \textit{Complete K\"ahler manifolds with zero Ricci curvature}. II. Invent.	Math. 106 (1991), 27--60.
\bibitem{TY3}G. Tian and S.T. Yau,  \textit{Existence of K\"ahler-Einstein metrics on complete K\"ahler manifolds and their applications to algebraic geometry}, Adv.Ser.Math.Phys. (1987), 574-628, Mathematical aspects of string theory (San Diego, Calif., 1986).
\bibitem{Wu} D. Wu,  \textit{K\"ahler-Einstein metrics of negative Ricci curvature on general quasi-projective manifolds}, Comm. Anal. Geom. 16(2008), no.2, 395--435.
\bibitem{Yau} S.-T. Yau, \textit{Some function theoretic properties of complete Riemannian manifolds and their application to geometry}, Indiana Univ. Math. J., 25 (1976) 659--670. 
\bibitem{Yau2}S.-T. Yau,  \textit{A general schwarz lemma for K\"ahler manifolds.}  American Journal of Mathematics,  197--203, 1978.
\bibitem{Yau3}S.-T.Yau, \textit {On the Ricci curvature of a compact  K\"ahler manifold and the complex Monge--Amp\`ere equation. I.} Comm. Pure. Appl. Math. {\bf 31} (1978), 339--411.
\bibitem{Yau4}S.-T. Yau, \textit{The Role of Partial Differential Equations in Differential Geometry}, Proceedings of the International Congress of Mathematicians, 1978, Helsinki.


\end{thebibliography}
\end{document}